\date{}
\newtheorem{theorem}{Theorem}[section]
\newtheorem{thm}{Theorem}[section]
\newtheorem{cor}[thm]{Corollary}
\numberwithin{equation}{section}
\begin{document}
\setlength{\unitlength}{1cm}



\vskip1.5cm

\centerline { \textbf{ Completeness of
 one two-interval boundary   }}
 \centerline { \textbf{value problem  with transmission conditions  }}

\vskip.2cm


\vskip.8cm \centerline {\textbf{ K. Aydemir$^\dag$ and  O. Sh.
Mukhtarov$^\dag$}}

\vskip.5cm

\centerline {$^\dag$Department of Mathematics, Faculty of Science,}
\centerline {Gaziosmanpa\c{s}a University,
 60250 Tokat, Turkey}
\centerline {e-mail : {\tt kadriye.aydemir@gop.edu.tr,
omukhtarov@yahoo.com }}




\vskip.5cm \hskip-.5cm{\small{\bf Abstract :} This paper presents a
new approach to the two-interval Sturm-Liouville eigenfunction
expansions, based essentially on the method of integral equations.
We consider the Sturm-Liouville problem together with two
supplementary transmission conditions at one interior point. Further
we develop Green's function method for spectral analysis of the
considered problem in modified Hilbert space. \vskip0.3cm\noindent
{\bf Keywords :} \
 Sturm-Liouville problems,
transmission conditions, expansions theorem.{\vskip0.3cm\noindent
{\bf AMS subject classifications : 34L10, 34L15 }

\hrulefill

\section{\textbf{Introduction}}

The development of classical, rather then the operatoric,
Sturm-Liouville theory in the years after 1950 can be found in
various sources; in particular in the texts of Atkinson \cite{at},
Coddington and Levinson \cite{co}, Levitan and Sargsjan \cite{le}.
The operator theoretic development is given in the texts by Naimark
\cite{na} and Akhiezer and Glazman \cite{ak}. The subject of
eigenfunction expansions is as old as operator theory. The
Sturm-Liouville problem is also important because the solutions to a
homogeneous BVP with  homogeneous BCs produce a set of orthogonal
functions. Such functions can be used to represent functions in
Fourier series expansions. The completeness of classical systems of
eigenfunction expansions was originally related to mechanical
problems and boundary value problems for differential operators.
Later the study of eigenfunctions expansions has gained an
independent and abstract status. The expansion has an integral
operator form whose kernel is a spectral function, the
representation of which is the Green function of the operator. For
the method of treating such problems see \cite{le1}. The method of
Sturm expansions is widely used in calculations of the spectroscopic
characteristics of atoms and molecules \cite{ro, she}. The
advantages of the method are both the possibility of adequate choice
of the nature of the spectrum of an unperturbed problem in the
calculation of quantum mechanical systems using the perturbation
theory and the possibility of providing best convergence of
corresponding expansions in basis functions. In this study we shall
investigate one two-interval boundary value  problem ,namely, the
Sturm-Liouville equation
\begin{eqnarray}\label{1.1}
-p(x)y^{\prime \prime }(x)+ q(x)y(x)=\lambda y(x)
\end{eqnarray}
to hold in disjoint intervals $(-\pi,0) \ \textrm{and} \ (0,\pi) $
 under boundary conditions
\begin{eqnarray}\label{1.2}
\cos \alpha y(-\pi)+\sin\alpha y'(-\pi)=0
\end{eqnarray}
\begin{eqnarray}\label{1.3}
\cos\beta y(\pi)+\sin\beta y'(\pi)=0.
\end{eqnarray}
where singularity of the solution $y=y(x,\lambda)$ prescribed by
transmission conditions
\begin{eqnarray}\label{1.4}
\beta^{-}_{11}y'(0-)+\beta^{-}_{10}y(0-)+\beta^{+}_{11}y'(0+)+\beta^{+}_{10}y(0+)=0
\end{eqnarray}
\begin{eqnarray}\label{1.5}
\beta^{-}_{21}y'(0-)+\beta^{-}_{20}y(0-)+\beta^{+}_{21}y'(0+)+\beta^{+}_{20}y(0+)=0
\end{eqnarray}
where $p(x)=p_1>0 \ \textrm{for} \ x \in [-\pi, 0) $, $p(x)=p_2>0 \
\textrm{for} \ x \in (0, \pi], $ the potential $q(x)$ is given
real-valued function which continuous in each of the intervals
$[-\pi, 0) \ \textrm{and} \ (0, \pi]$, and has a finite limits
$q(\mp0)$, $\lambda$ \ is a complex eigenparameter, \
$\beta^{\pm}_{ij},\ (i=1,2 \ \textrm{and} \ j=0,1)$ are real
numbers. In this study we introduce a new approach to the
two-interval Sturm-Liouville eigenfunction expansions, based
essentially on the method of integral equations. Note that in
physics many problems arise in the form of boundary value problems
involving second order ordinary differential equations. This
derivation based on that in \cite{kr}, however, is more thorough
than that in most elementary physics texts; while most parameters
such as density and other thermal properties are treated as constant
in such treatments, the following allows fundamental properties of
the bar to vary as a function of the bar's length, which will lead
to a Sturm-Liouville problem of a more general nature.

\section{Construction of the resolvent by means of Green' s function }
\subsection{The  resolvent and the Green' s
function } Let $u=\phi _{1}(x,\lambda )$ be solution of the equation
(\ref{1.1}) on left interval $[-\pi,0)$ satisfying the conditions
\begin{equation}\label{tam1}
 \ \ u(-\pi)=\sin\alpha , \
\ u'(-\pi)=-\cos\alpha
\end{equation}
and $u=\phi _{2}(x,\lambda )$ be solution of same equation on right
interval $(0,\pi]$ satisfying the conditions
\begin{eqnarray}\label{tamm1}
&&u(0+) =\frac{1}{\Delta_{12}}(\Delta_{23}\phi_{1}(0,\lambda
)+\Delta_{24}\frac{\partial\phi_{1}(0,\lambda )}{\partial x})
\\ &&\label{tamm2}u'(0+)
=\frac{-1}{\Delta_{12}}(\Delta_{13}\phi_{1}(0,\lambda
)+\Delta_{14}\frac{\partial\phi_{1}(0,\lambda )}{\partial x})
\end{eqnarray}
Similarly we define by $u=\chi _{2}(x,\lambda )$ the solution of the
equation (\ref{1.1}) on the right interval $(0,\pi]$ by initial
conditions
\begin{equation}\label{4}
 \ \ u(\pi)=-\sin\beta, \ \ \ \
\  u'(\pi)=\cos\beta
\end{equation}
 and by $u=\chi _{1}(x,\lambda )$ the solution of the equation
 (\ref{1.1}) on the left interval $[-\pi,0)$ by initial
conditions
\begin{eqnarray}\label{6}
&&u(0-) =\frac{-1}{\Delta_{34}}(\Delta_{14}\chi_{2}(0,\lambda
)+\Delta_{24}\frac{\partial\chi_{2}(0,\lambda )}{\partial x})
\\ &&\label{6t}  u'(0-,\lambda )
=\frac{1}{\Delta_{34}}(\Delta_{13}\chi_{2}(0,\lambda
)+\Delta_{23}\frac{\partial\chi_{2}(0,\lambda )}{\partial x})
\end{eqnarray}
respectively,  where $\Delta_{ij} \ (1\leq i< j \leq 4)$ denotes the
determinant of the i-th
 and j-th columns of the matrix $$ T= \left[%
\begin{array}{cccc}
  \beta^{+}_{10} & \beta^{+}_{11} & \beta^{-}_{10} & \beta^{-}_{11}\\
\beta^{+}_{20} & \beta^{+}_{21} &   \beta^{-}_{20} & \beta^{-}_{21}
  \\
\end{array} %
 \right]. $$
By applying the method of \cite{ka}  we can show that each of these
solutions are entire functions of complex parameter $ \lambda.$ It
is clear that each of the Wronskians
\begin{equation}\label{wr}
\omega_i(\lambda)=W(\phi _{i}(x,\lambda ), \chi _{i}(x,\lambda )), \
\ Ý=1,2
\end{equation}
are independent of variable x. By using the definitions of solutions
$\phi _{i}(x,\lambda )$ and $\chi _{i}(x,\lambda )$ we have
\begin{eqnarray} \label{wr1}
w_{2}(\lambda) &=&\phi_{2}(0+,\lambda
)\frac{\partial\chi_{2}(0+,\lambda )}{\partial
x}-\frac{\partial\phi_{2}(0+,\lambda )}{\partial x}\chi_{2}(0+,\lambda )\nonumber \\
 &=&\frac{\Delta_{34}}{\Delta_{12}}(\phi_{1}(0-,\lambda )\frac{\partial\chi_{1}
 (0-,\lambda )}{\partial x}-\frac{\partial\phi_{1}(0-,\lambda )}{\partial x}\chi_{1}(0-,\lambda )) \nonumber \\
&=&\frac{\Delta_{34}}{\Delta_{12}}w_{1}(\lambda)
\end{eqnarray}
Introduce to the consideration the Green's function (see, \cite{ay})
$G(x,\xi,\lambda)$ of the problem $(\ref{1.1})-(\ref{1.5})$ as
\begin{eqnarray}\label{gr}
G(x,\xi;\lambda)=\left\{\begin{array}{c}
\frac{\phi_{1}(x,\lambda)\chi_{1}(\xi,\lambda)}{\Delta_{34}p_1\omega_{1}(\lambda)}, \ \ \ if \ x \in[-\pi,0), \  \xi \in [-\pi,x)\ \  \\
                \\
\frac{\chi_{1}(x,\lambda)\phi_{1}(\xi,\lambda)}{\Delta_{34}p_1\omega_{1}(\lambda)},  \ \ \ if \ x \in[-\pi,0), \  \xi \in [x,0)\ \  \\
  \\
\frac{\chi_{1}(x,\lambda)\phi_{2}(\xi,\lambda)}{\Delta_{34}p_1\omega_{1}(\lambda)}, \ \ \ \ \ if \ x \in[-\pi,0), \  \xi \in (0,\pi]\ \    \\
                \\
\frac{\phi_{2}(x,\lambda)\chi_{2}(\xi,\lambda)}{\Delta_{12}p_2\omega_{2}(\lambda)}, \ \ \ if \ \ x \in(0,\pi], \ \xi \in (0,x]\ \   \\
  \\
 \frac{\chi_{2}(x,\lambda) \phi_{2}(\xi,\lambda)}{\Delta_{12}p_2\omega_{2}(\lambda)}, \ \ \ if \ \ x \in(0,\pi], \ \xi \in [x,\pi]\ \   \\
\\
\frac{\phi_{2}(x,\lambda)\chi_{1}(\xi,\lambda)}{\Delta_{12}p_2\omega_{2}(\lambda)}, \ \ \ if \ x \in(0,\pi], \ \xi \in [-\pi,0)\ \   \\
 \\
             \end{array}\right.
\end{eqnarray}
Let $f(x)$ be any function continuous in  $[-\pi, 0) \ \textrm{and}
\ (0, \pi]$,  which has finite limits $f(\mp 0)$. Show that the
function
\begin{eqnarray}\label{gr1}
u(x,\lambda)&=& \frac{\Delta_{34}}{p_1}\int_{-\pi}^{0}
G(x,\xi;\lambda)f(\xi)d\xi+
 \frac{\Delta_{12}}{p_2}\int_{0}^{\pi}G(x,\xi;\lambda)f(\xi)d\xi
\end{eqnarray}
satisfies the nonhomogeneous equation
\begin{eqnarray}\label{gr2}
-p(x)u''+\{q(x)-\lambda\}u=f(x), \ x \in [-\pi,0)\cup(0,\pi]
\end{eqnarray}
and boundary-transmission conditions (BTC) (\ref{1.2})-(\ref{1.5}).
Indeed, putting (\ref{gr}) in (\ref{gr1}) and then  differentiating
twice we have
\begin{eqnarray}\label{gr3}
u''(x,\lambda)&=&\left\{\begin{array}{c}
\frac{\chi''_{1}(x,\lambda)}{p_1\omega_1(\lambda)}\int_{-\pi}^{x}\phi_{1}(\xi,\lambda)f(\xi)d\xi
+
\frac{\phi''_{1}(x,\lambda)}{p_1\omega_1(\lambda)}\int_{x}^{0}\chi_{1}(\xi,\lambda)f(\xi)d\xi \\ +\frac{\phi''_{1}(x,\lambda)}{p_2\omega_2(\lambda)}\int_{0}^{\pi}\chi_{2}(\xi,\lambda)f(\xi)d\xi-\frac{f(x)p_1}{p_1\omega_1(\lambda)} W(\phi_{1}(x,\lambda),\chi_{1}(x,\lambda)) \\ \ \ \ \ \ \ \ \ for \  x \in [-\pi,0) \\
                \\
\frac{\chi''_{2}(x,\lambda)}{p_1\omega_1(\lambda)}\int_{-\pi}^{0}\phi_{1}(\xi,\lambda)f(\xi)d\xi+\frac{\chi''_{2}(x,\lambda)}{\omega_2(\lambda)}\int_{0}^{x}\phi_{2}(\xi,\lambda)f(\xi)d\xi\\
+
\frac{\phi''_{2}(x,\lambda)}{p_2\omega_2(\lambda)}\int_{x}^{\pi}\chi_{2}(\xi,\lambda)f(\xi)d\xi-\frac{f(x)p_2}{p_2\omega_2(\lambda)} W(\phi_{2}(x,\lambda),\chi_{2}(x,\lambda))\  \\ \ \ \ \ \ \ \ for \  x \in (0,\pi] \\
             \end{array}\right.
\\ \label{gr4} &=&(q(x)-\lambda)\left\{\begin{array}{c}
\frac{\chi_{1}(x,\lambda)}{p_1\omega_1(\lambda)}\int_{-\pi}^{x}\phi_{1}(\xi,\lambda)f(\xi)d\xi
+
\frac{\phi_{1}(x,\lambda)}{p_1\omega_1(\lambda)}\int_{x}^{0}\chi_{1}(\xi,\lambda)f(\xi)d\xi \\ +\frac{\phi_{1}(x,\lambda)}{p_2\omega_2(\lambda)}\int_{0}^{\pi}\chi_{2}(\xi,\lambda)f(\xi)d\xi-f(x) \ \ \ \ \ \ \ \ for \  x \in [-\pi,0) \\
                \\
\frac{\chi_{2}(x,\lambda)}{p_1\omega_1(\lambda)}\int_{-\pi}^{0}\phi_{1}(\xi,\lambda)f(\xi)d\xi+\frac{\phi_{2}(x,\lambda)}{p_2\omega_2(\lambda)}\int_{x}^{\pi}\chi_{2}(\xi,\lambda)f(\xi)d\xi\\
+
\frac{\chi_{2}(x,\lambda)}{p_2\omega_2(\lambda)}\int_{0}^{x}\phi_{2}(\xi,\lambda)f(\xi)d\xi-f(x) \ \ \ \ \ \ \ for \  x \in (0,\pi] \\
             \end{array}\right.
\nonumber\\ \nonumber\\ &=&(q(x)-\lambda)u(x,\lambda)-f(x)
\end{eqnarray}
i.e. the function $u(x,\lambda)$ satisfies the  nonhomogeneous
equation (\ref{gr2}).  The function $u(x,\lambda)$ also satisfies
the transmission conditions (\ref{1.4})-(\ref{1.5})
 and both boundary conditions (\ref{1.2})-(\ref{1.3}).
Therefore $u(x,\lambda)$ forms an resolvent of the problem
(\ref{1.1})-(\ref{1.5}).
\section{Expansion results for Green's function}
Through in below we assume that the homogeneous equation
\begin{eqnarray}\label{ex1}
p(x)u''-q(x)u=0
\end{eqnarray}
under the same BTC's (\ref{1.2})-(\ref{1.5}) has only the trivial
solution $u=0$. This amounts to the assumption that $\lambda=0$ is
not an eigenvalue of the considered BVTP (\ref{1.1})-(\ref{1.5}).
There is no less of generality in this assumption, since otherwise
we may consider a new equation
$$-p(x)u''+\widetilde{q}(x)u=\lambda u$$
for $\widetilde{q}(x)=q(x)-\widetilde{\lambda}$ under the same BTC's
(\ref{1.2})-(\ref{1.5}). Obviously this problem has the same
eigenfunctions as for the considered BVTP (\ref{1.1})-(\ref{1.5}),
all  eigenvalues are shifted through $\widetilde{\lambda}$ to the
right and therefore, we can chose $\widetilde{\lambda}$  such that
$\lambda=0$ is not eigenvalue of the new problem.  Now defining
$G_{0}(x,\xi)=G(x,\xi;0)$ we see that the function
 \begin{eqnarray}\label{ex2}
u_{0}(x)&=& \frac{\Delta_{34}}{p_1}\int_{-\pi}^{0}
G_{0}(x,\xi)f(\xi)d\xi + \frac{\Delta_{12}}{p_2} \int_{0}^{\pi}
G_{0}(x,\xi)f(\xi)d\xi
\end{eqnarray}
solves the nonhomogeneous equation $p(x)u''-q(x)u=f(x)$ and
satisfies all boundary-transmission conditions
(\ref{1.2})-(\ref{1.5}). Rewritting (\ref{gr2}) in the form
\begin{eqnarray}\label{ex3}
-p(x)u''-q(x)u=\lambda u-f(x)
\end{eqnarray}
We see that this equation under the same BTC's
(\ref{1.2})-(\ref{1.5})  reduces to the integral equation
\begin{eqnarray}\label{ex4}
&&u(x)+\lambda \{\frac{\Delta_{34} }{p_1}\int_{-\pi}^{0}
G_{0}(x,\xi)u(\xi)d\xi+ \frac{\Delta_{12}}{p_2}\int_{0}^{\pi}
G_0(x,\xi)u(\xi)d\xi \} \nonumber\\&=&
\frac{\Delta_{34}}{p_1}\int_{-\pi}^{0} G_0(x,\xi)f(\xi)d\xi+
\frac{\Delta_{12} }{p_2}\int_{0}^{\pi} G_0(x,\xi)f(\xi)d\xi
\end{eqnarray}
Consequently the considered BVTP (\ref{1.1})-(\ref{1.5}) can be
written in equivalent integral equation  form  as
\begin{eqnarray}\label{ex5}
u(x)=-\lambda \{\frac{\Delta_{34} }{p_1}\int_{-\pi}^{0}
G_0(x,\xi)u(\xi)d\xi+\frac{ \Delta_{12}}{p_2} \int_{0}^{\pi}
G_0(x,\xi)u(\xi)d\xi \}
\end{eqnarray}
By applying the same method as in \cite{ka} we can prove that the
BVTP (\ref{1.1})-(\ref{1.5}) has precisely denumerable many
eigenvalues $\lambda_{0}, \lambda_{1}, \lambda_{2},...,$ with
following asymptotic behaviour as $n\rightarrow \infty. $\\
\textbf{(i)} If $\sin\beta \neq 0$ and $\sin\alpha\neq 0$, then
\begin{equation}
s_{n}= (\frac{n-1}{2})+O\left( \frac{1}{n}\right) \label{(5.1)}
\end{equation}
\textbf{(ii)} If $\sin\beta\neq  0$ and $\sin\alpha= 0$, then
\begin{equation}
s_{n}=\frac{n}{2} +O\left( \frac{1}{n}\right), \label{(5.2)}
\end{equation}
\textbf{(iii)} If $\sin\beta= 0$ and $\sin\alpha\neq 0$, then
\begin{equation}
s_{n}=\frac{n}{2}+O\left( \frac{1}{n}\right) , \label{(5.3)}
\end{equation}
\textbf{(iv)} If $\sin\beta=0$ and $\sin\alpha= 0$, then
\begin{equation}
s_{n}=\frac{n}{2} +O\left( \frac{1}{n}\right) , \label{(5.4)}
\end{equation}
 as $n\rightarrow \infty$ where $\lambda _{n}=s_{n}^{2}$ $\label{t4}$. It is evident that the
functions $\phi_n(x)=\phi(x,\lambda_n)$ are eigenfunctions
corresponding to the eigenvalues $\lambda_n$. Let
\begin{eqnarray}\label{ex8}
\varphi_n(x)=(\frac{\Delta_{34}}{p_1}\int_{-\pi}^{0}\phi_n^{2}(\xi)d\xi+
\frac{\Delta_{12}}{p_2} \int_{0}^{\pi}
\phi_n^{2}(x)dx)^{-\frac{1}{2}}\Phi_n(x)
\end{eqnarray}
It is obviously seen that the sequence $(\varphi_n(x))$ forms an
orthonormal set of eigenfunctions in the sense of
\begin{eqnarray}\label{ex9}
\frac{\Delta_{34}}{p_1}\int_{-\pi}^{0}\varphi_{n}(x)\varphi_{m}(x)dx+
\frac{\Delta_{12} }{p_2}\int_{0}^{\pi}
\varphi_{n}(x)\varphi_{m}(x)dx=\delta_{nm},
\end{eqnarray}
where $\delta_{nm}$ is the kronecker delta.
\begin{theorem}\label{ext}
The Green's function $G_0(x,\xi)$ can be expanded into an
eigenfunction series
\begin{eqnarray}\label{ex10}
G_0(x,\xi)=-\sum_{n=0}^{\infty}\lambda_{n}^{-1}
\varphi_{n}(x)\varphi_{n}(\xi)
\end{eqnarray}
which converges absolutely and uniformly on
$([-\pi,0)\cup(0,\pi])^{2}.$
\end{theorem}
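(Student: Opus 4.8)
The plan is to read (\ref{ex10}) as the Hilbert--Schmidt (Mercer-type) expansion of the kernel of the resolvent operator at $\lambda=0$, and then to upgrade the mean-square convergence supplied by abstract spectral theory to absolute and uniform convergence by constructing a Weierstrass majorant out of the eigenvalue asymptotics. First I would place everything in the modified Hilbert space $H$ with inner product
\begin{equation*}
\langle f,g\rangle=\frac{\Delta_{34}}{p_1}\int_{-\pi}^{0}f(x)g(x)\,dx+\frac{\Delta_{12}}{p_2}\int_{0}^{\pi}f(x)g(x)\,dx,
\end{equation*}
and introduce the integral operator $A$ with kernel $G_0$, namely $(Af)(x)=\frac{\Delta_{34}}{p_1}\int_{-\pi}^{0}G_0(x,\xi)f(\xi)\,d\xi+\frac{\Delta_{12}}{p_2}\int_{0}^{\pi}G_0(x,\xi)f(\xi)\,d\xi$. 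Since $G_0$ is bounded and symmetric on the compact square $([-\pi,0)\cup(0,\pi])^2$, the operator $A$ is Hilbert--Schmidt, hence compact, and self-adjoint in $H$. The equivalent integral form (\ref{ex5}) shows that the normalized eigenfunctions obey $A\varphi_n=-\lambda_n^{-1}\varphi_n$; because $\lambda=0$ is, by the standing assumption, not an eigenvalue, $A$ is injective, so by the spectral theorem for compact self-adjoint operators the orthonormal family $(\varphi_n)$ is complete in $H$.

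Next I would invoke the Hilbert--Schmidt expansion theorem: the bilinear series associated with a Hilbert--Schmidt kernel converges in the $L^2$ norm of the square, which yields
\begin{equation*}
G_0(x,\xi)=-\sum_{n=0}^{\infty}\lambda_n^{-1}\varphi_n(x)\varphi_n(\xi)\qquad\text{in }L^2\big(([-\pi,0)\cup(0,\pi])^2\big).
\end{equation*}
This is already the identity (\ref{ex10}), but only in the mean-square sense; the substantive task is to promote it to absolute and uniform convergence.

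The decisive ingredient is a uniform bound on the eigenfunctions. Using the explicit representations of $\phi_i,\chi_i$ together with the asymptotic distribution $s_n=\tfrac{n}{2}+O(n^{-1})$ (so that $\lambda_n=s_n^2\sim n^2/4$), I would establish $\sup_x|\varphi_n(x)|\le M$ with $M$ independent of $n$. Granting this, the series $\sum_n|\lambda_n|^{-1}$ converges, and therefore
\begin{equation*}
\sum_{n}\big|\lambda_n^{-1}\varphi_n(x)\varphi_n(\xi)\big|\le M^2\sum_{n}|\lambda_n|^{-1}<\infty
\end{equation*}
uniformly in $(x,\xi)$. By the Weierstrass $M$-test the series in (\ref{ex10}) converges absolutely and uniformly; its sum is consequently continuous on each subrectangle and, coinciding with $G_0$ in $L^2$, equals $G_0$ everywhere.

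I expect the main obstacle to be exactly this uniform boundedness of $(\varphi_n)$: one must extract the leading trigonometric asymptotics of $\phi_n$ on both subintervals, transfer them across the interior point through the transmission conditions (\ref{1.4})--(\ref{1.5}), and verify that the normalizing factor in (\ref{ex8}) remains bounded away from zero, so that the $L^\infty$ norm is comparable to the unit $L^2$ norm. A cleaner alternative, available whenever the shifted operator is sign-definite so that all $-\lambda_n^{-1}$ share one sign, is to apply Mercer's theorem directly to $\pm G_0$, which provides the absolute and uniform convergence without an explicit majorant; the $M$-test route is, however, more elementary and dispenses with any positivity hypothesis.
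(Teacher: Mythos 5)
Your proposal is correct in substance, but it takes a genuinely different route from the paper's own proof. The paper splits the work the same way you do (convergence first, identification of the sum second), but it handles the identification by contradiction rather than by operator theory: it sets $\widetilde{G_0}(x,\xi)=G_0(x,\xi)+\sum_{n}\lambda_n^{-1}\varphi_n(x)\varphi_n(\xi)$, observes that this is a continuous symmetric kernel, and invokes the classical integral-equations theorem (cited to Kolmogorov--Fomin \cite{ko}) that a symmetric kernel not identically zero has at least one eigenfunction: there would exist $\mu_0\neq 0$ and $\psi_0\neq 0$ satisfying (\ref{ex12}). Orthonormality (\ref{ex9}) and the integral equation (\ref{ex5}) then force $\psi_0$ to be orthogonal to every $\varphi_n$; consequently the series part of $\widetilde{G_0}$ drops out of (\ref{ex12}) and $\psi_0$ becomes an eigenfunction of the original BVTP corresponding to $-1/\mu_0$, hence orthogonal to itself, hence zero --- a contradiction. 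What each approach buys: the paper's argument never presupposes completeness of $(\varphi_n)$ (completeness is \emph{derived later}, in Sections 4 and 5, from this very theorem), while your argument needs completeness up front, which you get from injectivity of $A$ plus the spectral theorem; in exchange, the $L^2$ bilinear identity comes to you for free from Hilbert--Schmidt theory, and the only hard analysis left is the uniform eigenfunction bound. Two points in your version deserve tightening. First, injectivity of $A$ is not literally the assumption that $\lambda=0$ is not an eigenvalue of the BVTP; it follows from Section 2, where it is shown that $u=Af$ solves the nonhomogeneous equation with the boundary-transmission conditions, so $Af=0$ forces $f=0$ (and you also implicitly need that \emph{every} eigenfunction of $A$ with nonzero eigenvalue comes from the BVTP, i.e.\ the stated equivalence of (\ref{ex5}) with the BVTP, so that $(\varphi_n)$ exhausts the spectrum of $A$). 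Second, your ``main obstacle,'' the bound $\sup_x|\varphi_n(x)|\le M$ together with $\sum_n|\lambda_n|^{-1}<\infty$, is exactly the ingredient the paper also leaves implicit in its one-sentence convergence claim, so you are not below the paper's level of rigor there; your Weierstrass majorant $M^2\sum_n|\lambda_n|^{-1}$ with $\lambda_n\sim n^2/4$ is the natural way to make that sentence precise. Your closing remark about Mercer's theorem is also apt: the paper's contradiction argument is in fact the classical proof scheme behind Mercer-type expansion theorems, so your ``alternative'' is the one closest in spirit to what the authors actually do.
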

\begin{proof}
By using the definition of eigenfunctions $\varphi_{n}(x)$ and
asymptotic behaviour of eigenvalues it is not difficult to show that
the series in (\ref{ex10}) converges absolutely and uniformly and
therefore represents a continuous function there. To prove the
equality (\ref{ex10}), suppose, it possible that, the function
\begin{eqnarray}\label{ex11}\widetilde{G_0}(x,\xi)=G_0(x,\xi)+\sum_{n=0}^{\infty} \lambda_{n}^{-1} \varphi_{n}(x)\varphi_{n}(\xi)
\end{eqnarray}
is not identically zero. Taking in view that the Kernel
$\widetilde{G_0}(x,\xi)$ is symmetric slightly modifying the method
of proving of familiar theorem in the theory of integral equations
(see, for example, \cite{ko}) which assert that any symmetric kernel
which is not identically zero has at least one eigenfunction, we can
prove that there is a real number $\mu_0\neq0$ and real valued
function $\psi_0\neq0$ such that
\begin{eqnarray}\label{ex12}
\frac{\Delta_{34}}{p_1}\int_{-\pi}^{0}
\widetilde{G}_0(x,\xi)\psi_{0}(\xi)d\xi+
\frac{\Delta_{12}}{p_2}\int_{0}^{\pi}
\widetilde{G}_0(x,\xi)\psi_{0}(\xi)d\xi=\mu_0\psi_0(x)
\end{eqnarray}
Multiplying by $\varphi_{n}(x)$ and make the necessary calculations
we have
\begin{eqnarray}\label{ex13} && \mu_0(\frac{\Delta_{34}}{p_1}\int_{-\pi}^{0}\psi_0(x)\varphi_{n}(x)dx+
\frac{\Delta_{12}}{p_2} \int_{0}^{\pi}
\psi_0(x)\varphi_{n}(x)dx)=\nonumber\\
&&\frac{\Delta_{34}}{p_1}
\int_{-\pi}^{0}\psi_0(\xi)(\frac{\Delta_{34}}{p_1}\int_{-\pi}^{0}
\widetilde{G_0}(x,\xi)\varphi_{n}(x)dx+ \frac{\Delta_{12}}{p_2}
\int_{0}^{\pi} \widetilde{G_0}(x,\xi)\varphi_{n}(x)dx)d\xi\nonumber\\
&+&\frac{\Delta_{12}}{p_2}\int_{0}^{\pi}\psi_0(\xi)(\frac{\Delta_{34}}{p_1}\int_{-\pi}^{0}
\widetilde{G_0}(x,\xi)\varphi_{n}(x)dx+ \frac{\Delta_{12}}{p_2}
\int_{0}^{\pi} \widetilde{G_0}(x,\xi)\varphi_{n}(x)dx)d\xi
\end{eqnarray}
Recalling that the set of eigenfunctions $(\varphi_{n}(x))$ is
orthonormal in the sense of (\ref{ex9}) it is easy to see that
\begin{eqnarray}\label{ex14} && \frac{\Delta_{34}}{p_1}\int_{-\pi}^{0}
\widetilde{G_0}(x,\xi)\varphi_{n}(x)dx+ \frac{\Delta_{12}}{p_2}
\int_{0}^{\pi} \widetilde{G_0}(x,\xi)\varphi_{n}(x)dx \nonumber\\
&=& \frac{\Delta_{34}}{p_1}\int_{-\pi}^{0} G(x,\xi)\varphi_{n}(x)dx+
\frac{\Delta_{12}}{p_2}\int_{0}^{\pi}
G(x,\xi)\varphi_{n}(x)dx+\frac{\varphi_{n}(\xi)}{\lambda_n}
\end{eqnarray}
Substituting (\ref{ex14}) in (\ref{ex13})  and  taking in view the
fact that the eigenfunction $\varphi_{n}(x)$ satisfy the integral
equation (\ref{ex5}) for $\lambda=\lambda_n$ gives
\begin{eqnarray}\label{ex15} &&\frac{ \Delta_{34}}{p_1}\int_{-\pi}^{0}\psi_0(x)\varphi_{n}(x)dx+
\frac{\Delta_{12}}{p_2}\int_{0}^{\pi} \psi_0(x)\varphi_{n}(x)dx=0
\end{eqnarray}
for all $n=0,1,2...$, i.e. the function $\psi_0(x)$  is orthogonal
to all eigenfunctions. On the other hand, from (\ref{ex12}) and
(\ref{ex15}) it follows that
\begin{eqnarray}\label{ex16}
\frac{\Delta_{34}}{p_1}\int_{-\pi}^{0} G_0(x,\xi)\psi_{0}(\xi)d\xi+
\frac{\Delta_{12}}{p_2}\int_{0}^{\pi}
G_0(x,\xi)\psi_{0}(\xi)d\xi=\mu_0\psi_0(x),
\end{eqnarray}
so $\psi_0(x)$  is also an eigenfunction of BVTP
(\ref{1.1})-(\ref{1.5}) corresponding to eigenvalue
$-\frac{1}{\mu_{0}}$. Since it is orthogonal to all eigenfunctions,
it is  orthogonal to itself, i.e.
\begin{eqnarray}\label{ex16} \frac{\Delta_{34}}{p_1}\int_{-\pi}^{0}\psi^{2}_0(s)ds+
\frac{\Delta_{12}}{p_2}\int_{0}^{\pi} \psi^{2}_0(s)ds=0
\end{eqnarray}
and hence $\psi_{0}(s)$ is identically zero. We get a contradiction,
which complete the proof.
\end{proof}

\section{Completeness of the eigenfunctions }
To prove the completeness $(\varphi_n)$ in square integrable
function space at first we shall prove the next theorem.
\begin{thm}\label{4.1}
Let $f(x)$ be any function on $[-\pi,0)\cup(0,\pi]$ satisfying the
following conditions\\
\textbf{i)}$f, f'$ and  $f''$ are exist and continuous in both
interval $[-\pi,0) \ and \ (0,\pi]$\\
\textbf{ii)}There exist a finite one-hand side limits
$f(\pm0), f'(\pm0) \  and  \ f''(\pm0)$\\
\textbf{iii)} f satisfies the BTC's (\ref{1.2})-(\ref{1.5}).\\ Then
f(x) can be expanded into an absolutely and uniformly convergent
series of eigenfunctions $(\varphi_n)$, namely
\begin{eqnarray}\label{com} f(x)=\sum_{n=0}^{\infty}\{
\frac{\Delta_{34}}{p_1}\int_{-\pi}^{0}f(\xi)\varphi_{n}(\xi)d\xi+
\frac{\Delta_{12} }{p_2}\int_{0}^{\pi}
f(\xi)\varphi_{n}(\xi)d\xi\}\varphi_{n}(x)
\end{eqnarray}
\end{thm}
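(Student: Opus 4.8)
The plan is to reduce the expansion of $f$ to the already-established eigenfunction expansion of $G_0$ in Theorem \ref{ext}, by exhibiting $f$ itself as the image of a continuous function under the Green's operator at $\lambda=0$. Throughout, write $\langle u,v\rangle:=\frac{\Delta_{34}}{p_1}\int_{-\pi}^{0}uv\,dx+\frac{\Delta_{12}}{p_2}\int_{0}^{\pi}uv\,dx$ for the modified inner product normalized in (\ref{ex9}), and let $\mathcal{G}$ denote the weighted Green's operator of (\ref{ex2}), i.e. $\mathcal{G}h(x)=\frac{\Delta_{34}}{p_1}\int_{-\pi}^{0}G_0(x,\xi)h(\xi)d\xi+\frac{\Delta_{12}}{p_2}\int_{0}^{\pi}G_0(x,\xi)h(\xi)d\xi$. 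First I would set $g(x):=p(x)f''(x)-q(x)f(x)$; by hypotheses (i)--(ii) this is continuous on each of $[-\pi,0)$ and $(0,\pi]$ with finite one-sided limits at $0$, so it is an admissible right-hand side for (\ref{ex2}). Since $\lambda=0$ is assumed not to be an eigenvalue, the homogeneous problem under the BTC's has only the trivial solution, so the problem $p u''-qu=g$ under (\ref{1.2})--(\ref{1.5}) has a unique solution. Both $f$ (by hypothesis (iii)) and $\mathcal{G}g$ from (\ref{ex2}) solve it, whence $f=\mathcal{G}g$.

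Next I would insert the series (\ref{ex10}) for $G_0$ into $f=\mathcal{G}g$. Because that series converges absolutely and uniformly on $([-\pi,0)\cup(0,\pi])^2$, the summation may be interchanged with the (finite-measure, bounded-integrand) integrals, giving
\begin{eqnarray*}
f(x)=-\sum_{n=0}^{\infty}\lambda_n^{-1}\langle g,\varphi_n\rangle\,\varphi_n(x).
\end{eqnarray*}
It then remains to identify $-\lambda_n^{-1}\langle g,\varphi_n\rangle$ with $\langle f,\varphi_n\rangle$. Since the kernel $G_0$ is symmetric and the weight appearing in (\ref{ex2}) coincides with that in the inner product (\ref{ex9}), the operator $\mathcal{G}$ is self-adjoint for $\langle\cdot,\cdot\rangle$; moreover, by (\ref{ex5}) taken at $\lambda=\lambda_n$ the eigenfunction satisfies $\mathcal{G}\varphi_n=-\lambda_n^{-1}\varphi_n$. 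Hence $\langle f,\varphi_n\rangle=\langle \mathcal{G}g,\varphi_n\rangle=\langle g,\mathcal{G}\varphi_n\rangle=-\lambda_n^{-1}\langle g,\varphi_n\rangle$, and substituting this back collapses the series to $\sum_{n}\langle f,\varphi_n\rangle\varphi_n(x)$, which is exactly (\ref{com}).

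Finally, for absolute and uniform convergence I would argue directly from the relation $\langle f,\varphi_n\rangle=-\lambda_n^{-1}\langle g,\varphi_n\rangle$. Bessel's inequality for the continuous (hence square-integrable) function $g$ gives $\sum_n|\langle g,\varphi_n\rangle|^2<\infty$; the eigenvalue asymptotics $\lambda_n=s_n^2$ with $s_n\sim n/2$ give $\sum_n\lambda_n^{-2}<\infty$; and the asymptotic form of the eigenfunctions (obtained as in \cite{ka}) furnishes a uniform bound $\sup_x|\varphi_n(x)|\le C$. A Cauchy--Schwarz estimate then yields
\begin{eqnarray*}
\sum_{n=0}^{\infty}\big|\langle f,\varphi_n\rangle\big|\,\big|\varphi_n(x)\big|\le C\Big(\sum_{n=0}^{\infty}\lambda_n^{-2}\Big)^{1/2}\Big(\sum_{n=0}^{\infty}|\langle g,\varphi_n\rangle|^2\Big)^{1/2}<\infty
\end{eqnarray*}
uniformly in $x$, so the Weierstrass $M$-test delivers the claimed absolute and uniform convergence.

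I expect the main obstacle to be the representation step $f=\mathcal{G}g$, specifically confirming that $f$ genuinely lies in the domain of the operator: this is where hypothesis (iii) and the non-degeneracy of $\lambda=0$ are indispensable, and where one must check that the boundary and transmission terms arising from Green's identity across the interior point $x=0$ cancel against the chosen weights $\Delta_{34}/p_1$ and $\Delta_{12}/p_2$ (equivalently, the relation $w_2=\frac{\Delta_{34}}{\Delta_{12}}w_1$ of (\ref{wr1})). Once the self-adjointness of $\mathcal{G}$ in the modified inner product (\ref{ex9}) is secured, the remaining manipulations are routine.
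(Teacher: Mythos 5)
Your proposal is correct and is essentially the paper's own proof: the paper likewise represents $f$ as the image of $f''-qf$ under the Green's operator at $\lambda=0$ (its equation (\ref{com1}), legitimate by the standing assumption that $\lambda=0$ is not an eigenvalue), substitutes the bilinear expansion (\ref{ex10}) of Theorem \ref{ext}, and converts the resulting coefficients $-\lambda_n^{-1}\langle pf''-qf,\varphi_n\rangle$ into $\langle f,\varphi_n\rangle$. The single point of divergence is that last conversion step, where the paper integrates by parts twice (using the BTC's to cancel the boundary and interface terms), whereas you invoke the symmetry of $G_0$ together with $\mathcal{G}\varphi_n=-\lambda_n^{-1}\varphi_n$ from (\ref{ex5}) --- an equivalent argument, since the kernel's symmetry encodes exactly those cancellations --- and your Bessel/Cauchy--Schwarz justification of the absolute and uniform convergence makes explicit a detail the paper leaves implicit.
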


\begin{proof}
Let f be satisfied all conditions of Theorem. Then by virtue of
(\ref{ex5}) the equality
\begin{eqnarray}\label{com1}
f(x)&=&\frac{\Delta_{34}}{p_1}\int_{-\pi}^{0}
G_0(x,\xi)(f''(\xi)-q(\xi)f(\xi))d\xi\nonumber\\&+&
\frac{\Delta_{12} }{p_2} \int_{0}^{\pi}
G(x,\xi)(f''(\xi)-q(\xi)f(\xi))d\xi
\end{eqnarray}
is hold. Substituting (\ref{ex10}) in the right hand of this
equality and integrating by parts twice yields the needed equality
\begin{eqnarray}\label{gr15} f(x)=\sum_{n=0}^{\infty}\{
\frac{\Delta_{34}}{p_1}\int_{-\pi}^{0}f(\xi)\varphi_{n}(\xi)d\xi+
\frac{\Delta_{12} }{p_2} \int_{0}^{\pi}
f(\xi)\varphi_{n}(\xi)dx\}\varphi_{n}(x)
\end{eqnarray}
The proof is complete.
\end{proof}
\begin{cor}
Let f be as in previous Theorem. Then the modified parseval equality
\begin{eqnarray}\label{gr17}
\frac{\Delta_{34}}{p_1}\int_{-\pi}^{0}f^{2}(x)dx+
\frac{\Delta_{12}}{p_2} \int_{0}^{\pi}
f^{2}(x)dx&=&\sum_{n=0}^{\infty}(\frac{\Delta_{34}}{p_1}\int_{-\pi}^{0}f(\xi)\varphi_{n}(\xi)d\xi\nonumber\\&+&
\frac{\Delta_{12}}{p_2} \int_{0}^{\pi}
f(\xi)\varphi_{n}(\xi)dx\}\varphi_{n}(\xi))^{2}
\end{eqnarray}
is hold. This is also called the completeness relation.
\end{cor}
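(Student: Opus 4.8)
The plan is to read the Corollary as the ``diagonal'' case of the expansion established in Theorem \ref{4.1}. For a function $f$ satisfying hypotheses (i)--(iii), Theorem \ref{4.1} already furnishes the absolutely and uniformly convergent expansion $f(x)=\sum_{n=0}^{\infty}c_n\varphi_n(x)$, where I abbreviate the Fourier coefficient by
\begin{eqnarray}\label{cor-coef}
c_n=\frac{\Delta_{34}}{p_1}\int_{-\pi}^{0}f(\xi)\varphi_{n}(\xi)d\xi+ \frac{\Delta_{12}}{p_2}\int_{0}^{\pi} f(\xi)\varphi_{n}(\xi)d\xi.
\end{eqnarray}
The target identity is precisely $\langle f,f\rangle=\sum_{n=0}^{\infty}c_n^{2}$, where $\langle\cdot,\cdot\rangle$ denotes the modified inner product appearing throughout the paper, namely $\langle g,h\rangle=\tfrac{\Delta_{34}}{p_1}\int_{-\pi}^{0}gh\,dx+\tfrac{\Delta_{12}}{p_2}\int_{0}^{\pi}gh\,dx$.

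First I would multiply the expansion $f(x)=\sum_{n=0}^{\infty}c_n\varphi_n(x)$ by $f(x)$ and apply the two weighted integrals that define $\langle\cdot,\cdot\rangle$. The key step is the interchange of the summation with these integrals: this is legitimate because Theorem \ref{4.1} guarantees that the series converges uniformly on $[-\pi,0)\cup(0,\pi]$, so term-by-term integration against the bounded, piecewise-continuous function $f$ is valid on each of the two subintervals. After the interchange, each summand becomes $c_n\langle\varphi_n,f\rangle$, and by comparing \eqref{cor-coef} with the definition of the coefficient I recognize $\langle\varphi_n,f\rangle=c_n$. Hence the left-hand side collapses to $\sum_{n=0}^{\infty}c_n^{2}$, which is exactly the asserted Parseval relation \eqref{gr17}.

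An equivalent route, should one prefer to keep the orthonormality \eqref{ex9} in the foreground, is to insert the expansion of $f$ into \emph{both} factors of $\langle f,f\rangle$ and then use \eqref{ex9} to kill all off-diagonal terms $\langle\varphi_n,\varphi_m\rangle=\delta_{nm}$; uniform convergence again justifies summing and integrating in either order, and the double sum reduces to $\sum_n c_n^{2}$. Either way the computation is routine once the analytic input is in place.

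The only genuine point requiring care—and thus the main obstacle—is the justification of the termwise integration, i.e. that uniform convergence of the eigenfunction series (inherited from Theorem \ref{4.1}) indeed permits exchanging $\sum$ and $\int$ against $f$. Since the convergence is uniform and the interval is of finite length with $f$ bounded, this is immediate; no delicate estimates on the eigenfunctions $\varphi_n$ beyond those already used in Theorem \ref{4.1} are needed. I would therefore state the interchange explicitly, cite the uniform convergence from Theorem \ref{4.1}, and then present the short collapse to the diagonal as above.
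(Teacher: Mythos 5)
Your proposal is correct and coincides with what the paper itself relies on: the paper states this corollary without any separate proof, treating it as the immediate consequence of Theorem \ref{4.1}, and your argument (termwise integration of the absolutely and uniformly convergent expansion $f=\sum_{n}c_n\varphi_n$ against the bounded function $f$, or equivalently inserting the expansion into both slots and invoking the orthonormality relation (\ref{ex9})) is precisely that routine deduction. The only analytic point, the interchange of sum and integral, is justified exactly as you say, by uniform convergence on intervals of finite length.
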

We already know that the resolvent $u(x,\lambda)$ which determined
by (\ref{gr1}) for all $\lambda$ are not eigenvalues,  satisfies all
conditions of the Theorem. By using this fact, denoting $\ell
u(x)=p(x)u''(x)-q(x)u(x)$ and integrating by parts twice we have
\begin{eqnarray}\label{for}
c_n(\ell u(.,\lambda))&=&\frac{\Delta_{34}}{p_1}\int_{-\pi}^{0} \ell
u(\xi,\lambda)\varphi_{n}(\xi)d\xi+\frac{ \Delta_{12}}{p_2}
\int_{0}^{\pi} \ell u(\xi,\lambda)\varphi_{n}(\xi)d\xi\nonumber\\&=&
\frac{\Delta_{34}}{p_1}\int_{-\pi}^{0}
u(\xi,\lambda)\ell\varphi_{n}(\xi)d\xi+ \frac{\Delta_{12}}{p_2}
\int_{0}^{\pi}
u(\xi,\lambda)\ell\varphi_{n}(\xi)d\xi\nonumber\\&=&-\lambda_n
c_n(u(.,\lambda))
\end{eqnarray}
where the Fourier coefficients $c_n$ are defined as
\begin{eqnarray}\label{for1} c_n(f)=\sum_{n=0}^{\infty}
(\frac{\Delta_{34}}{p_1}\int_{-\pi}^{0}f(\xi)\varphi_{n}(\xi)d\xi+
\frac{\Delta_{12}}{p_2} \int_{0}^{\pi} f(\xi)\varphi_{n}(\xi)d\xi).
\end{eqnarray}
Since the resolvent $u(x,\lambda)$ satisfies the equation  $\ell
u(x,\lambda)+\lambda u(x,\lambda)=f(x)$ we have
\begin{eqnarray}\label{for1} c_n(f)&=&c_n(\ell
u(.,\lambda)+\lambda u(.,\lambda))=c_n (\ell u(.,\lambda))+\lambda
c_n (u(.,\lambda))\nonumber\\&=&-\lambda_nc_n (u(.,\lambda))+\lambda
c_n (u(.,\lambda))=(\lambda-\lambda_n)c_n (u(.,\lambda)) .
\end{eqnarray}

Thus if we know the expansion of given function f(x) into an
eigenfunctions, then the solution of homogeneous equation
(\ref{gr2}) satisfying BTC's (\ref{1.2})-(\ref{1.4}) can be obtain
by formula

\begin{eqnarray}\label{com} u(x,\lambda)=\sum_{n=0}^{\infty}\frac{1}{\lambda-\lambda_n}(
\frac{\Delta_{34}}{p_1}\int_{-\pi}^{0}f(\xi)\varphi_{n}(\xi)d\xi+
\frac{\Delta_{12}}{p_2} \int_{0}^{\pi}
f(\xi)\varphi_{n}(\xi)d\xi)\varphi_{n}(x))
\end{eqnarray}
for all $\lambda$ are not eigenvalues.
 \section{Expansion of mean-square integrable\\ functions into a series of eigenfunction}
The expansion theorem will now be extended to the square integrable
functions.
\begin{thm}
Let $f(x)$ be any square integrable function on $[-\pi,0) \ and \
(0,\pi]$. Then f can be expanded into Fourier series of
eigenfunctions in the sense of mean-square convergeness, namely the
formula
\begin{eqnarray}\label{me}
\lim_{n\rightarrow\infty}
\{\frac{\Delta_{34}}{p_1}\int\limits_{-\pi}^{0}(f(x)-\sum_{k=0}^{n}c_k(f)\varphi_k(x))^{2}dx+
\frac{\Delta_{12}}{p_2}\int\limits_{0}^{\pi}(f(x)-\sum_{k=0}^{n}c_k(f)\varphi_k(x))^{2}dx\}=0
\end{eqnarray}
is hold.
\end{thm}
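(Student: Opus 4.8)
The plan is to recast the expansion as an orthogonal-projection problem in the modified Hilbert space and then combine the best-approximation property of Fourier partial sums with a density argument. Write $H$ for the space of square-integrable functions on $[-\pi,0)\cup(0,\pi]$ equipped with the inner product
\[
\langle f,g\rangle=\frac{\Delta_{34}}{p_1}\int_{-\pi}^{0}f(x)g(x)\,dx+\frac{\Delta_{12}}{p_2}\int_{0}^{\pi}f(x)g(x)\,dx,
\]
which is exactly the bilinear form appearing in (\ref{ex9}) and (\ref{me}). By (\ref{ex9}) the eigenfunctions $(\varphi_n)$ form an orthonormal system in $H$, and the numbers $c_k(f)$ are precisely the Fourier coefficients $\langle f,\varphi_k\rangle$. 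Denoting the partial sum $S_nf=\sum_{k=0}^{n}c_k(f)\varphi_k$, the quantity under the limit in (\ref{me}) is just $\|f-S_nf\|^2$, so the theorem asserts $\|f-S_nf\|\to 0$ for every $f\in H$.

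First I would record the standard Hilbert-space facts valid for any orthonormal system. Since $S_nf$ is the orthogonal projection of $f$ onto $V_n=\mathrm{span}\{\varphi_0,\dots,\varphi_n\}$, it is the best approximation to $f$ from $V_n$, i.e. $\|f-S_nf\|\le\|f-w\|$ for every $w\in V_n$, and Bessel's inequality $\sum_k c_k(f)^2\le\|f\|^2$ holds. Both follow at once by expanding $\|f-\sum_k a_k\varphi_k\|^2$ and using orthonormality (\ref{ex9}); no feature of the particular problem is needed here.

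The heart of the argument is a density lemma: the class $\mathcal D$ of functions satisfying conditions (i)--(iii) of Theorem \ref{4.1} is dense in $H$. To see this, observe that any function smooth on each subinterval and vanishing identically in a neighbourhood of each of the four endpoints $-\pi$, $0^-$, $0^+$, $\pi$ automatically fulfils (i)--(iii): all one-sided boundary values and derivatives entering (\ref{1.2})--(\ref{1.5}) are then zero, so the boundary conditions and the transmission conditions (\ref{1.4})--(\ref{1.5}) are trivially met. Such compactly-supported-in-the-interior functions are dense in $L^2$ of each open interval, hence dense in $H$. I expect this step to be the main obstacle---not because it is deep, but because it is where one must check that membership in $\mathcal D$ (the regularity hypotheses together with the full set of boundary--transmission constraints) is compatible with $L^2$-density, and where the tacit positivity of the weights $\Delta_{34}/p_1$ and $\Delta_{12}/p_2$, needed for $\langle\cdot,\cdot\rangle$ to be a genuine inner product and for (\ref{me}) to define a norm, must be invoked.

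Finally I would assemble the result by the usual $\varepsilon/2$ argument. Given $f\in H$ and $\varepsilon>0$, use the density lemma to choose $g\in\mathcal D$ with $\|f-g\|<\varepsilon/2$. By Theorem \ref{4.1} the eigenfunction series of $g$ converges to $g$ absolutely and uniformly on the bounded set $[-\pi,0)\cup(0,\pi]$; since uniform convergence on a set of finite measure forces convergence in the $H$-norm, there is an $N$ with $\|g-S_ng\|<\varepsilon/2$ for all $n\ge N$. Then, applying the best-approximation property with the test element $w=S_ng\in V_n$,
\[
\|f-S_nf\|\le\|f-S_ng\|\le\|f-g\|+\|g-S_ng\|<\varepsilon\qquad(n\ge N).
\]
As $\varepsilon$ was arbitrary, $\|f-S_nf\|\to0$, which is exactly the assertion (\ref{me}), and the proof is complete.
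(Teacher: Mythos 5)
Your proposal is correct and takes essentially the same route as the paper: approximate $f$ in the modified inner product by a smooth $g$ vanishing near $-\pi$, $0$ and $\pi$ (so that the boundary--transmission conditions hold trivially and Theorem \ref{4.1} applies to $g$), then conclude by an $\varepsilon$-argument using elementary facts about the orthonormal system $(\varphi_n)$. The only divergence is cosmetic: where the paper splits $f-F_n=(f-g)+(g-G_n)+(G_n-F_n)$ (with $F_n$, $G_n$ the partial sums for $f$ and $g$) and bounds the last term by Bessel's inequality applied to $f-g$ together with Minkowski's inequality, you instead invoke the best-approximation property of the orthogonal projection onto $\mathrm{span}\{\varphi_0,\dots,\varphi_n\}$, which shortens the bookkeeping and gives $\varepsilon$ rather than $3\varepsilon$.
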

\begin{proof}
Given any $\epsilon>0$, there exist an infinitely differentiable
function g(x) which vanish in the neighborhoods $x=-\pi, \  x=0 \
\textrm{and} \  x=\pi$ such that
\begin{eqnarray}\label{me1}
\frac{\Delta_{34}}{p_1}\int_{-\pi}^{0}(f(x)-g(x))^{2}dx+
\frac{\Delta_{12}}{p_2}\int_{0}^{\pi}(f(x)-g(x))^{2}dx<\epsilon
\end{eqnarray}
Denote, for shorting,
$$F_n(x)=\sum_{k=0}^{n}c_k(f)\varphi_k(x) \ \ \textrm{and} \ \ G_n(x)=\sum_{k=0}^{n}c_k(g)\varphi_k(x)$$
By virtue of Theorem (\ref{4.1})  there exist an integer $N_1$,
depending on $\epsilon$, such that
\begin{eqnarray}\label{me2}
\frac{\Delta_{34}}{p_1}\int_{-\pi}^{0}(g(x)-G_n(x))^{2}dx+
\frac{\Delta_{12}}{p_2}\int_{0}^{\pi}(g(x)-G_n(x))^{2}dx<\epsilon
\end{eqnarray}
for all $n\geq N_1$. By the well known Bessel inequality it can be
shown easily that
\begin{eqnarray}\label{me3}
\frac{\Delta_{34}}{p_1}\int_{-\pi}^{0}(G_n(x)-F_n(x))^{2}dx+
\frac{\Delta_{12}}{p_2}\int_{0}^{\pi}(G_n(x)-F_n(x))^{2}dx<\epsilon
\end{eqnarray}
for all $\varepsilon>0$. Finally, writing $f(x)-F_n(x)$ in the form
$f(x)-F_n(x)=(f(x)-g(x))+(g(x)-G_n(x))+(G_n(x)-F_n(x))$ and using
well-known Minkowski inequality from (\ref{me1}),  (\ref{me2}) and
(\ref{me3}) we can derive that
\begin{eqnarray}\label{me4}
\frac{\Delta_{34}}{p_1}\int_{-\pi}^{0}(f(x)-F_n(x))^{2}dx+
\frac{\Delta_{12}}{p_2}\int_{0}^{\pi}(f(x)-F_n(x))^{2}dx<3\epsilon
\end{eqnarray}
for  $n\geq N_1$ which proving the formula (\ref{me}).
\end{proof}
\begin{cor}
If f is as in previous theorem then the modified Parseval equality
\begin{eqnarray}\label{me5}
\frac{\Delta_{34}}{p_1}\int_{-\pi}^{0}f^{2}(x)dx+
\frac{\Delta_{12}}{p_2} \int_{0}^{\pi} f^{2}(x)dx
=\sum_{n=0}^{\infty}c^{2}_{n}(f).
\end{eqnarray}
is hold.
\end{cor}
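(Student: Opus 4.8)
The plan is to deduce the modified Parseval identity directly from the mean-square convergence just established, using nothing more than the Pythagorean expansion of a finite partial sum against an orthonormal family. Throughout I would write $\langle g,h\rangle=\frac{\Delta_{34}}{p_1}\int_{-\pi}^{0}gh\,dx+\frac{\Delta_{12}}{p_2}\int_{0}^{\pi}gh\,dx$ for the modified inner product, $\|\cdot\|$ for its norm, $c_k(f)=\langle f,\varphi_k\rangle$ for the Fourier coefficients, and $F_n=\sum_{k=0}^{n}c_k(f)\varphi_k$ for the $n$-th partial sum; the left-hand side of the asserted equality is then exactly $\|f\|^2$ and the right-hand side is $\sum_{n=0}^{\infty}c_n^2(f)$.

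The first step is the finite identity. Since the eigenfunctions $(\varphi_k)$ are orthonormal with respect to $\langle\cdot,\cdot\rangle$ by (\ref{ex9}), a direct expansion gives $\|F_n\|^2=\sum_{k=0}^{n}c_k^2(f)$ together with $\langle f-F_n,\varphi_k\rangle=0$ for every $k\le n$, whence $\langle f-F_n,F_n\rangle=0$. The Pythagorean relation then yields $\|f\|^2=\|f-F_n\|^2+\sum_{k=0}^{n}c_k^2(f)$ for each fixed $n$. This is simply the statement that $F_n$ is the orthogonal projection of $f$ onto the span of $\varphi_0,\dots,\varphi_n$, so in particular it already gives Bessel's inequality $\sum_{k=0}^{n}c_k^2(f)\le\|f\|^2$.

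The second step is to let $n\to\infty$. The preceding theorem asserts precisely that $\|f-F_n\|^2\to0$ for square integrable $f$, so the remainder term in the finite identity vanishes in the limit. Consequently the nondecreasing partial sums $\sum_{k=0}^{n}c_k^2(f)$ increase to $\|f\|^2$, which is at once the convergence of the series $\sum_{n=0}^{\infty}c_n^2(f)$ and the equality of its sum with $\|f\|^2$; written out, this is exactly the claimed relation (\ref{me5}).

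I expect essentially no analytic obstacle here, since the entire substantive content has been pushed into the mean-square convergence theorem, which I may assume. The only point that deserves a word of care is that $\langle\cdot,\cdot\rangle$ be a genuine inner product, that is, that the constant weights $\Delta_{34}/p_1$ and $\Delta_{12}/p_2$ be positive; only then are the Pythagorean identity and the interpretation of $\|f\|^2$ as a squared norm legitimate. This positivity is an implicit standing hypothesis of the transmission problem and has in fact already been used in forming the orthonormal system (\ref{ex8})--(\ref{ex9}), so invoking it here introduces nothing new.
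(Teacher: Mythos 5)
Your proof is correct and is exactly the deduction the paper intends: the paper states this corollary without proof as an immediate consequence of the preceding mean-square convergence theorem, and your argument (orthonormality of $(\varphi_k)$ via (\ref{ex9}), the Pythagorean identity $\|f\|^2=\|f-F_n\|^2+\sum_{k=0}^{n}c_k^2(f)$, then letting $n\to\infty$ using (\ref{me})) is precisely the standard way to make that step explicit. Your remark that the positivity of the weights $\Delta_{34}/p_1$ and $\Delta_{12}/p_2$ is an implicit standing hypothesis, already needed for the normalization in (\ref{ex8})--(\ref{ex9}), is also well taken.
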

This is also called the completeness relation.


\begin{thebibliography}{99}
\bibitem{ak}N. I. Akhiezer and I.M. Glazman, {\em Theory of linear operators in Hilbert space: I and
II,} Pitman, London and Scottish Academic Press, Edinburgh: 1981.

\bibitem{at}F. V. Atkinson., {\em On bounds for the Titchmarsh-Weyl m-coe±cients and for spectral
functions for second-order differential equations,}Proc. Royal Soc.
Edinburgh. (A) 97 (1984), 1-7.

\bibitem{ay} K. Aydemir and O. Sh. Mukhtarov {\em Green's Function Method for Self-Adjoint Realization of Boundary-Value Problems with Interior Singularities},
Abstract and Applied Analysis, vol. 2013, Article ID 503267, 7
pages, 2013. doi:10.1155/2013/503267





\bibitem{co}  E. A. Coddington and N. Levinson. {\em Theory of ordinary di®erential equations,} McGraw-Hill, New York:
1955.








\bibitem{ka} F. S. Muhtarov and K. Aydemir {\em Distributions of eigenvalues for Sturm-Liouville
problem under jump conditions}, Journal of New Results in Science
1(2012) 81-89.

\bibitem{ko}  A. N. Kolmogorov and S. V Fomin, , {\em
Introductory Real Analysis}, Dover, 1970.


\bibitem{kr}  E. Kreysig,
{\em Advanced Engineering Mathematics,} 8th Edition. John Wiley
Sons, New York, 1999.


\bibitem{le1} B. M. Levitan, {\em The Eigenfunction in Characteristic Functions of Differential Eguations of the  Second Order}, Gosudarstv. Izdat.
Tehn.-Teor. Lit., Moscow-Leningrad,1950.


\bibitem{le} B. M. Levitan and I.S. Sargsjan , {\em Sturm-Liouville and Dirac operators}, Mathematics and its Applications
(Soviet Series),59. Kluwer Academic Publishers Group,
Dordrecht:1991.



\bibitem{na}M. A. Naimark, {\em Linear di®erential operators: II.} A(Ungar, New York: 1968. Translated
from the second Russian edition).

\bibitem{ro} M. Rotenberg, {\em Theory and
application of Sturmian functions.} Adv. Atomic. and Mol. Phys. 6
(1970).

\bibitem{she} A. I. Sherstyuk, {\em  Problems of Theoretical Physics (Leningrad.}, Gos. Univ., Leningrad, 1988), Vol. 3.






















































\end{thebibliography}
\end{document}